\title{Logical coherence in Bayesian simultaneous three-way hypothesis tests}
\author{Bernardo F. Reimann \and Rafael Izbicki \and Julio M. Stern \and Rafael B. Stern \and  Lu\'is G. Esteves}
\declaretheorem[name=Theorem, refname={Theorem, Theorems}, Refname={Theorem, Theorems}, parent=section]{theorem}
\declaretheorem[name=Definition, refname={Definition, Definitions}, Refname={Definition, Definitions}, sibling=theorem, style=definition]{definition}
\declaretheorem[name=Example, refname={Example, Examples}, Refname={Example, Examples}, sibling=theorem, style=definition]{example}
\declaretheorem[name=Lemma, refname={Lemma, Lemmas}, Refname={Lemma, Lemmas}, sibling=theorem]{lemma}
\crefname{section}{section}{sections}
\Crefname{section}{Section}{Sections}
\crefname{table}{table}{tables}
\Crefname{table}{Table}{Tables}
\setlist[enumerate]{leftmargin=*}
\def\half{\frac{1}{2}}
\def\I{{\mathbb I}}
\def\E{{\textbf{E}}}
\def\P{{\mathbb P}}
\def\sP{\mathcal{P}}
\def\sX{\mathcal{X}}
\def\t0{{\theta_0}}
\definecolor{darkgreen}{rgb}{0.0, 0.5, 0.0}
\newif\ifdraft
\newcommand\remove{\bgroup\markoverwith{\textcolor{gray}{\rule[.5ex]{2pt}{1pt}}}\ULon}
\newcommand{\gfbstfig}{
 \begin{tikzpicture}[thick,scale=0.75, every node/.style={scale=0.9}]
  \draw (-6,2) circle (1.5);
  \draw [fill=lightgray] (-6,2) circle (0.8);
  \draw (-8,0) -- (-4,0) -- (-4,4) -- (-8,4) -- (-8,0);;
  \node at (-6,2) {{\large $R(x)$}}; 
  \node at (-6,0.8) {{\large $H$}};
  \node at (-6,4.5) {{\large $x \in POS(H)$}};
			
  \draw (-1.4,2.4) circle (1.1);
  \draw [fill=lightgray] (0.2,0.9) circle (0.7);
  \draw (-3,0) -- (1,0) -- (1,4) -- (-3,4) -- (-3,0);;
  \node at (0.2,0.9) {{\large$R(x)$}}; 
  \node at (-1.4,2.4) {{\large $H$}};
  \node at (-1,4.5) {{\large $x \in NEG(H)$}};	
			
  \draw [fill=lightgray] (5.2,1.9) circle (0.7); 
  \draw (4.4,2.4) circle (1.1);
  \draw (2,0) -- (6,0) -- (6,4) -- (2,4) -- (2,0);;
  \node at (5.2,1.9) {{\large $R(x)$}}; 
  \node at (4.2,2.5) {{\large $H$}};
  \node at (4,4.5) {{\large $x \in BND(H)$}};	
 \end{tikzpicture}
}
\begin{document}
\maketitle

\begin{abstract}
{\textbf{Abstract:}}
This paper studies whether 
Bayesian simultaneous three-way hypothesis tests
can be logically coherent.
Two types of results are obtained.
First, under the standard error-wise constant loss,
only for a limited set of models can
a Bayes simultaneous test be logically coherent.
Second, if more general loss functions are used, then
it is possible to obtain Bayes simultaneous tests
that are always logically coherent.
An explicit example of such a loss function is provided.
\end{abstract}

\section{Introduction}

In a three-way decision problem 
\citep{Yao2012,Liu2014,Yao2015}
one must classify objects into 
three categories.
While a two-way decision necessarily 
leads to an affirmation or a negation,
a three-way decision also allows
non-commitment or
pause to gather more evidence.
Such a flexible approach has led
to advances in areas such as
clustering \citep{Yu2017},
classification \citep{Zhou2014,Zhang2019},
multi-agent decisions \citep{Yang2012}
game theory \citep{Herbert2011,Azam2014,Bashir2021}, and
recommender systems \citep{Zhang2017}.

In particular, three-way decisions can
also be applied to statistical hypothesis testing
\citep{Wald1945,Kaiser1960,Tukey1960,Harris2016,Berg2004,Goudey2007,Esteves2016,Stern2017}.
In this context, one gathers data, $x \in \sX$, to
decide whether an unobserved quantity, $\theta \in \Theta$,
satisfies $\theta \in H$, for $H \subseteq \Theta$. While standard hypothesis tests allow only the rejection or non-rejection of $H$, three-way (agnostic) tests allow $H$ to be accepted, rejected or remain undecided. In the statistical literature, such a decision is usually represented by a function, $\varphi_{H}: \sX \rightarrow \{0,\half,1\}$. In this context, $\varphi_{H}(x) = 0$, $\varphi_{H}(x) = 1$, and $\varphi_{H}(x) = \half$ mean that one decides to, respectively, 
accept, reject and remain undecided about $H$
after observing $x$. This definition can be identified with
the standard three-decision regions:
\begin{align*}
 POS(H_0)
 &:= \{x \in \sX: \varphi_{H}(x) = 0\} \\
 NEG(H_0) 
 &:= \{x \in \sX: \varphi_{H}(x) = 1\} \\
 BND(H_0) 
 &:= \left\{x \in \sX: \varphi_{H}(x) = \half\right\}
\end{align*}

In order to determine the optimal decision regions, 
one can use Bayesian decision theory \citep{Yao2010}.
In this context, one possible approach is 
to use an error-constant (EC) loss function (Definition \ref{def:ec_loss}),
as presented in \cref{ex:ec_loss}.

\begin{definition}[Error-wise constant loss function]
 \label{def:ec_loss}
 Let $H$ be an hypothesis. 
 The error-wise constant (EC) loss function,
 $L_H$, is given by \cref{tab:ec_loss},
 where $0 < \lambda_{BP}^H < \frac{\lambda_{NP}^H}{2}$,
 $0 < \lambda_{BN}^H < \frac{\lambda_{PN}^H}{2}$, and
 $(\lambda_{PN}^H-\lambda_{BN}^H) \lambda_{NP}^H 
 > \lambda_{BP}^H\lambda_{PN}^H$.
 These restrictions are made so that
 the loss for each type of error corresponds to
 its intuitive meaning. For instance, when $H$,
 accepting $H$ is better than
 not deciding, which in turn is 
 better than rejecting $H$.
 Also, not deciding is always better than
 deciding randomly between 
 accepting or rejecting $H$.
 \begin{table}
  \centering
  \begin{tabular}{l|ll}
   & $\theta \in H$ & $\theta \notin H$ \\
   \hline
   accept    & $0$ & $\lambda_{PN}^H$ \\
   undecided & $\lambda_{BP}^H$ & $\lambda_{BN}^H$ \\
   reject    & $\lambda_{NP}^H$ & $0$
  \end{tabular}
  \caption{Error-constant loss function}
  \label{tab:ec_loss}
 \end{table}
\end{definition}

\begin{example}[Posterior probability three-way tests]
 \label{ex:ec_loss}
 Under the EC loss (\cref{def:ec_loss}),
 \citet{Yao2007} determines 
 the optimal three-way decision regions
 for hypothesis tests:
 \begin{align}
  \label{eq:ec_bayes}
  POS(H) 
  &= \{x \in \sX: \P(\theta \in H|x) > \beta^H\}, 
  \nonumber \\
  NEG(H)
  &= \{x \in \sX: \P(\theta \in H|x) < \alpha^H\}
  , \text{ and } 
  \nonumber \\
  BND(H)
  &= \{x \in \sX: \alpha^H \leq \P(\theta \in H|x) \leq \beta^H\}, 
  \nonumber \\
  \text{where } 
  & \beta^H = \frac{\lambda_{PN}^H-\lambda_{BN}^H}
  {(\lambda_{PN}^H-\lambda_{BN}^H)+\lambda_{BP}^H} < 1,
  \text{ and } 
  \alpha^H = \frac{\lambda_{BN}^H} 
  {(\lambda_{BN}^H-\lambda_{BP}^H)+\lambda_{NP}^H} > 0.
 \end{align}
\end{example}

A more general setting occurs in 
simultaneous hypothesis testing, in which 
one wishes to test a collection of hypotheses, 
$\sigma(\Theta)$, at the same time \citep{shaffer1995multiple,lehmann2005testing}.
\cref{def:bayes} describes Bayesian optimality
in this context:

\begin{definition}[Bayesian optimality for simultaneous hypothesis tests]
 \label{def:bayes}
 For each hypothesis, $H \in \sigma(\Theta)$, 
 let $L_H$ be a loss function. 
 A simultaneous hypothesis test, $\varphi$, is
 Bayes with respect to $L$ if,
 for every hypothesis, $H$,
 $\varphi_H$ is a Bayes test for testing
 $H$ against $L_H$.
\end{definition}

The following example shows that
posterior-probability based simultaneous tests are
obtained from the EC loss 
in a similar fashion as in \cref{ex:ec_loss}:

\begin{example}[Simultaneous test based 
for error-wise constant (EC) losses]
 \label{ex:post_prob}
 Let $L$ be a loss function such that,
 for each hypothesis, $H$, $L_H$ is
 the loss function presented in \cref{tab:ec_loss}.
 In this case, the simultaneous test that
 satisfies \cref{eq:ec_bayes} for each $H$ is
 Bayes with respect to $L$.
\end{example}

\begin{definition}[Simultaneous test based for trivial error-wise constant (TEC) losses]
 \label{def:post_prob}
 If for each $H \in \sigma(\Theta)$,
 $L_H$ is such that the constants in
 \cref{tab:ec_loss} do not depend on $H$, then
 $L$ is said to be a trivial error-wise constant loss (TEC).
 In this case, the Bayes simultaneous test given by
 \cref{eq:ec_bayes} is such that 
 $\alpha^H$ and $\beta^H$ do not depend on $H$.
\end{definition}

In the context of simultaneous tests,
one is often interested in 
an overall interpretation of
all the tests.
One condition that is required for
the interpretability of the tests is
their logical coherence.
For instance, if $x \in POS(\theta > 1)$ and
also $x \in NEG(\theta > 0)$, then,
after observing $x$, one would believe both
that $``\theta > 1''$ is true and that 
$``\theta > 0''$ is false,
a logical contradiction.
Such contradictory conclusions
are hard to interpret and
should be avoided.

Based on this challenge and
on previous proposals for logical requirements \citep{Gabriel1969,Schervish1996,Lavine1999,Hommel2008,Romano2011,Izbicki2015,Hansen2022}, 
the concept of logical coherence in
simultaneous hypothesis testing
is proposed \citep{Esteves2016}:

\begin{definition}[Logical coherence]
 \label{def:coherence}
 A simultaneous hypothesis test is 
 logically coherent if:
 \begin{enumerate}
  \item {(Propriety)}\ $POS(\Theta) = \sX$,
  \item {(Monotonicity)}\ If $H_1 \subseteq H_2$, then
  $x \in POS(H_1)$ implies that $x \in POS(H_2)$ and
  $x \in BND(H_1)$ implies that
  $x \in BND(H_2) \cup POS(H_2)$ ,
  \item {(Intersection consonance)} \ If $x \in POS(H_1)$ and $x \in POS(H_2)$, then
  $x \in POS(H_1 \cap H_2)$ ,
  \item {(Invertibility)}\  If $x \in POS(H)$, then
  $x \in NEG(H^c)$ .
 \end{enumerate}
\end{definition}

This paper studies under what conditions
it is possible to obtain 
a Bayes simultaneous test that
is logically coherent.
\Cref{sec:review} reviews a useful
characterization of logical coherence
in terms of region estimators.
Using this characterization, 
\Cref{sec:ec} explores the relation between
the EC loss and logical coherence.
This section shows that it is impossible to
fully reconcile Bayesian decision theory with
logical coherence while using the EC loss.
Given this impossibility, 
\Cref{sec:gfbst} explores more general loss functions.
This section defines the GFBST loss and shows that,
under this loss, the Bayes test is
always logically coherent.

\section{Characterization of logical coherence}
\label{sec:review}

Logically coherent tests can be characterized
in terms of region estimators \citep{Esteves2016},
A region estimator, $R(x)$, is usually interpreted as
a set of likely values for $\theta$.
Region estimators are formalized below:

\begin{definition}[Region estimator]
 \label{def:region}
 A region estimator is a function
 $R: \sX \rightarrow \sP(\Theta)$,
 where $\sP(\Theta)$ is the collection of
 all subsets of $\Theta$.
\end{definition}

A particular type of region estimator is the
highest posterior density (HPD) set.
The HPD contains the parameter values with
posterior density above a given threshold.
If $\Theta$ is finite, then
the posterior density is often
taken as the posterior probability, that is,
the HPD contains the most probable values for $\theta$.

\begin{example}[Highest posterior density set]
 \label{example:hpd}
 A region estimator, $R(x)$, is a 
 highest posterior density set with
 respect to a posterior density, $f(\theta|x)$, if
 there exists $k$ such that
 \begin{align*}
  R(x) &=
  \{\theta \in \Theta: 
  f(\theta|x) \geq k\}.
 \end{align*}
\end{example}

\begin{figure}
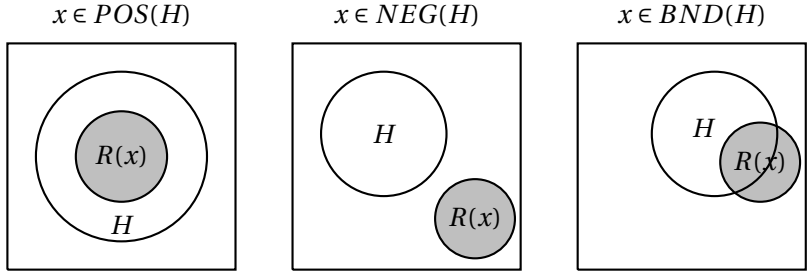

  \center
  \gfbstfig
  \mbox{} \vspace{-2mm} \mbox{} \\ 
  \caption{$\varphi$ is
  a region-based test 
  for testing $H$.}
  \label{fig::region}
\end{figure}

Using region estimators,
one can construct a simultaneous test,
as illustrated in \cref{fig::region}.
A test based on a region estimator, $R(x)$, 
accepts $H$ if 
$R(x) \subseteq H$, that is,
all likely values for $\theta$ reside in $H$.
Similarly, it reject $H$ if
$H \cap R(x) = \emptyset$, that is
no likely value of $\theta$ resides in $H$.
Otherwise, the test remains agnostic about $H$.

\begin{definition}[Region-based test] 
 \label{def:region_test}
 $\varphi$ is a region-based test if
 there exists a region estimator, $R$,
 such that $x \in POS(H)$ if $R(x) \subseteq H$,
 $x \in NEG(H)$ if $R(x) \cap H = \emptyset$, and
 $x \in BND(H)$, otherwise, that is,
 \begin{align*}
  \varphi_H(x) &=
  \begin{cases}
   0 & \text{, if } R(x) \subseteq H \\
   1 & \text{, if } R(x) \cap H = \emptyset \\
   \half & \text{, otherwise}
  \end{cases}
 \end{align*}
\end{definition}

The (non-invariant)
Generalized Full Bayesian Significance Test (GFBST; \citealt{Stern2017}) is
a particular type of test based on a region estimator.
It uses an HPD as region estimator.

\begin{example}[GFBST]
 \label{ex:GFBST}
 The GFBST is a region-based test
 in which $R(x)$ is an HPD.
\end{example}

\Cref{ex:hypergeometric} describes a GFBST.

\begin{example}
 \label{ex:hypergeometric}
 Consider that $n$ balls are removed 
 without replacement from a box with
 with $\theta$ blue balls and
 $N-\theta$ yellow balls,
 where $n < N$.
 The total number of sampled blue balls, $X$,
 follows $X|\theta \sim \text{Hypergeometric}(\theta,N-\theta,n)$.
 Also, consider that, a priori,
 $\theta \sim \text{Binomial}(N, 0.5)$.
 It can be shown that
 $\theta - X|X \sim Binomial(N - n, 0.5)$. 
 Hence, for every $k > 0$ and $1 \leq x \leq n$,
 \begin{align*}
  R_k(x) = \left\{0 \leq i \leq N: 
  \bigg|\frac{(N-n)}{2}+x-i\bigg| < k\right\}
  & \text{ is a HPD.}
 \end{align*}
 In this case, a GFBST accepts $H$ if 
 it contains all points close to $\frac{N-n}{2} + x$,
 rejects $H$ if it contains none of these points,
 and  otherwise remains agnostic.
\end{example}

Under special circumstances
all logically coherent simultaneous tests are
based on region estimators \citep{Esteves2016}.
In particular, this relation is valid
when $\Theta$ is a finite set:

\begin{theorem}
 \label{thm:logical_region}
 If $\Theta$ is finite and
 $\varphi$ is a logically coherent
 simultaneous test, then
 $\varphi$ is based on a region estimator.
\end{theorem}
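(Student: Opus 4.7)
The plan is to construct an explicit region estimator and then verify that it makes $\varphi$ a region-based test in the sense of \cref{def:region_test}. For each $x \in \sX$, I would define
\[
R(x) := \bigcap\{H \in \sigma(\Theta) : x \in POS(H)\},
\]
the intersection of every hypothesis accepted at $x$. Propriety guarantees that $\Theta$ belongs to the defining family, so $R(x)$ is well-defined, and finiteness of $\Theta$ reduces the intersection to a finite one over accepted hypotheses. Applying intersection consonance inductively then yields $x \in POS(R(x))$; and $R(x)$ must be non-empty, since otherwise $x \in POS(\emptyset)$ together with invertibility would force $x \in NEG(\Theta)$, contradicting propriety.

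Next I would check the POS and NEG equivalences separately. For POS, one direction is immediate from the construction (if $x \in POS(H)$ then $H$ lies in the intersecting family, so $R(x) \subseteq H$); the converse follows from monotonicity applied to $x \in POS(R(x))$. For NEG, the easy direction is: if $R(x) \cap H = \emptyset$ then $R(x) \subseteq H^c$, so by the POS equivalence $x \in POS(H^c)$, and invertibility delivers $x \in NEG(H)$. Once POS and NEG are pinned down, the BND case falls out by elimination, because the three $R$-alternatives (contained in $H$, disjoint from $H$, otherwise) form a genuine partition of $\sX$ thanks to $R(x) \neq \emptyset$, matching the trichotomy of $\varphi_H$ pointwise.

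I expect the main obstacle to be the remaining half of the NEG equivalence, $x \in NEG(H) \Rightarrow x \in POS(H^c)$, since the stated invertibility supplies only the inclusion $POS(H^c) \subseteq NEG(H)$. My intended approach is to use the contrapositive of monotonicity to deduce $x \in NEG(\{\theta\})$ for every $\theta \in H$, and then to reconstruct $x \in POS(H^c) = POS\!\left(\bigcap_{\theta \in H}\{\theta\}^c\right)$ via intersection consonance applied to the singleton complements. The crux is the singleton case $NEG(\{\theta\}) \Rightarrow POS(\{\theta\}^c)$; I expect it to follow in the finite setting from the joint force of the four coherence axioms together with $\sigma(\Theta)$ exhausting $\sP(\Theta)$, and this is precisely the step where the finiteness of $\Theta$ should be doing essential work.
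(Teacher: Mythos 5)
Your construction $R(x) = \bigcap\{H \in \sigma(\Theta) : x \in POS(H)\}$ takes a genuinely different route from the paper, which instead quotients $\Theta$ by the atoms of $\sigma(\Theta)$ and invokes the region-estimator characterization of \citet{Esteves2016} on the quotient. Most of your skeleton is sound: propriety and finiteness make $R(x)$ a well-defined element of $\sigma(\Theta)$ with $x \in POS(R(x))$, the nonemptiness argument is correct, both $POS$ directions and the easy $NEG$ direction are correct, and $BND$ does follow by elimination.

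The problem is exactly the step you defer. The implication $x \in NEG(H) \Rightarrow x \in POS(H^c)$ does \emph{not} follow from the four axioms as literally written in \cref{def:coherence}, even for finite $\Theta$ with $\sigma(\Theta) = \sP(\Theta)$. Take $\Theta = \{1,2\}$, a single sample point $x$, and set $\varphi_{\Theta}(x)=0$ and $\varphi_{\{1\}}(x)=\varphi_{\{2\}}(x)=\varphi_{\emptyset}(x)=1$. Propriety, monotonicity and intersection consonance hold trivially, and the stated one-directional invertibility only needs checking at $H=\Theta$, where it holds; yet $x\in NEG(\{1\})$ while $x\notin POS(\{1\}^c)=POS(\{2\})$, and no nonempty $R(x)$ can be disjoint from both singletons, so this coherent-as-literally-defined test is not region-based. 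Hence your hope that the singleton case follows "from the joint force of the four coherence axioms," with finiteness doing essential work, cannot be realized. What actually closes the gap is reading invertibility as the biconditional $POS(H)=NEG(H^c)$, which is how the paper itself uses the axiom (its proof of \cref{lemma:union} infers $\varphi_{H_1^c}(x)=0$ from $\varphi_{H_1}(x)=1$); under that reading your hard $NEG$ direction is immediate, with no detour through singletons and no role for finiteness. A second, smaller gap: your reduction to singletons presupposes $\{\theta\}\in\sigma(\Theta)$, which the theorem does not assume --- this is precisely why the paper's proof passes to the quotient by the atoms of $\sigma(\Theta)$ before applying any singleton-based argument.
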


The next section studies under
what circumstances a Bayes test against 
an EC loss can be logically coherent.

\section{The relation between Bayesian optimality and logical coherence under error-wise constant loss}
\label{sec:ec}

A logically coherent test, $\varphi$, that is
Bayes against an EC loss admits
further characterization.
In such a case, not only is $\varphi$ 
a region-based test, but also
based on an HPD. That is,
a logically coherent test that is 
Bayes against an EC loss is a GFBST,
as presented in \cref{thm:GFBSTTEC}.\footnote{\cref{thm:consistentRegion}, in the Appendix, is used to prove \cref{thm:GFBSTTEC}.
Recall that if a test is logically coherent and $\Theta$ is a finite set, then the test is based on a region estimator.
\cref{thm:consistentRegion} shows that, if a Bayes test is
based on a region estimator, then
there exists a loss such that the region estimator is Bayes.
That is, a Bayes logically coherent test is necessarily based on a region estimator which is also Bayes.}

\begin{theorem}
\label{thm:GFBSTTEC}
 Let $\Theta$ be a finite set.
 If there exists a probability, $\P$, and 
 a TEC loss, $L$, such that
 a logically coherent simultaneous test, $\varphi$, is 
 Bayes against $L$
 according to $\P$, then $\varphi$ is a GFBST.
\end{theorem}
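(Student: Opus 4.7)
My plan is to peel the statement into two pieces: extract a region estimator from logical coherence via \cref{thm:logical_region}, then use the TEC Bayes characterization to identify that region with an HPD.

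Since $\Theta$ is finite and $\varphi$ is logically coherent, \cref{thm:logical_region} supplies a region estimator $R : \sX \to \sP(\Theta)$ such that for every hypothesis $H$ one has $\varphi_H(x) = 0$ iff $R(x) \subseteq H$ and $\varphi_H(x) = 1$ iff $R(x) \cap H = \emptyset$. The remainder of the argument fixes an arbitrary $x \in \sX$ and shows that $R(x) = \{\theta \in \Theta : \P(\theta \mid x) \geq k\}$ for some threshold $k$, which by \cref{example:hpd} is the defining property of an HPD.

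Because $L$ is TEC, \cref{ex:post_prob} together with \cref{def:post_prob} produce a common threshold $\beta$, independent of $H$, such that any Bayes test for $L$ must set $\varphi_H(x) = 0$ whenever $\P(H \mid x) > \beta$ and $\varphi_H(x) \neq 0$ whenever $\P(H \mid x) < \beta$. I apply this to each singleton-complement hypothesis $H_\theta = \Theta \setminus \{\theta\}$. On the region side, $\varphi_{H_\theta}(x) = 0$ is equivalent to $R(x) \subseteq H_\theta$, i.e., to $\theta \notin R(x)$; on the posterior side, $\P(H_\theta \mid x) = 1 - \P(\theta \mid x)$. Combining these gives
\begin{align*}
 \P(\theta \mid x) > 1 - \beta &\implies \theta \in R(x), \\
 \P(\theta \mid x) < 1 - \beta &\implies \theta \notin R(x).
\end{align*}
Setting $k = 1 - \beta$, this realises $R(x)$ as the posterior superlevel set at level $k$, up to parameters on the tie contour $\P(\theta \mid x) = k$, so $\varphi$ is a GFBST.

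The main obstacle is precisely that tie contour: when $\P(\theta \mid x) = 1 - \beta$, the Bayes condition alone permits $\varphi_{H_\theta}(x) \in \{0, \tfrac{1}{2}\}$, so a priori $R(x)$ could include some boundary parameters while excluding others with the same posterior mass, breaking the level-set structure. I would dispatch this by invoking the auxiliary \cref{thm:consistentRegion} flagged in the footnote: it upgrades $R$ from being merely induced by a Bayes test to being itself Bayes, as a region estimator, against a suitable loss, and a Bayes region estimator of that form must be a posterior superlevel set. This absorbs the boundary indeterminacy into the choice of threshold $k$ and pins $R(x)$ down to a genuine HPD, completing the identification of $\varphi$ as a GFBST.
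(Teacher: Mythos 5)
Your proof is correct and arrives at the same structure as the paper's --- extract $R$ from \cref{thm:logical_region}, then identify $R(x)$ with a posterior superlevel set --- but the identification step is carried out by a genuinely different and more elementary route. The paper goes straight to \cref{thm:consistentRegion}, computes the induced region-estimator loss $\bar L$ for the TEC loss, and reads off $R(x)=\{\theta:\P(\theta|x)\geq\alpha\}$ from the singleton hypotheses $\{\theta\}$ at the lower threshold $\alpha$; you instead test the complements $\Theta\setminus\{\theta\}$ against the upper threshold $\beta$ and obtain $R(x)$ as the superlevel set at $1-\beta$, which away from ties is self-contained and bypasses the $\bar L$ machinery entirely. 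Two remarks. First, your fallback to \cref{thm:consistentRegion} for the tie contour makes the direct argument partly redundant: once that lemma is invoked, the entire identification follows from it (as in the paper), so you could either drop the direct computation or keep it and not cite the lemma at all. Second, the boundary issue you flag is real, but \cref{thm:consistentRegion} as stated does not fully dispel it: a Bayes region estimator against $\bar L$ is itself non-unique when $\P(\theta|x)$ lands exactly on the threshold, since including or excluding such a $\theta$ leaves the posterior expected loss unchanged, so ``Bayes region estimator'' alone does not force the all-or-nothing treatment of tied points that a superlevel set requires. The paper's own proof silently adopts the ``include all ties'' resolution, so your argument is at the same level of rigor as the published one; a fully airtight version would need an additional observation pinning down the tied points, which neither you nor the paper supplies.
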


However, do there exist actual cases
in which a test is both 
Bayes with respect to a TEC loss and 
also logically coherent?
\Cref{sec:review} shows that
a logically coherent test must be
based on a region estimator.
Also, \citet{Yao2007} shows that
a Bayes test against a TEC loss must be
a probability-based test.
Despite these strong restrictions,
\cref{thm:fullyconsistent_EC} shows that
every logically coherent test is
Bayes against a TEC loss for 
some probability measure.

\begin{theorem}
 \label{thm:fullyconsistent_EC}
 Let $\Theta$ and $\sX$ be finite sets.
 If $\phi$ is a logically coherent simultaneous test, then
 there exists a probability, $\P$, and
 a TEC loss function, $L$, such that
 $\varphi$ is Bayes against $L$.
\end{theorem}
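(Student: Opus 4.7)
The plan is to exploit \cref{thm:logical_region}: since $\Theta$ is finite and $\varphi$ is logically coherent, there exists a region estimator $R : \sX \to \sP(\Theta)$ with $\varphi = \varphi_R$. A short preliminary observation is that $R(x) \neq \emptyset$ for every $x$, for otherwise $R(x) \subseteq \emptyset$ would force $\varphi_\emptyset(x) = 0$, and Invertibility would then demand $\varphi_\Theta(x) = 1$, contradicting Propriety. With $R$ in hand, the task reduces to producing a probability $\P$ on $\Theta \times \sX$ and TEC constants such that the posterior-probability rule in \cref{eq:ec_bayes} reproduces $\varphi_R$ pointwise.

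For the TEC loss I would take $\lambda_{PN}^H = \lambda_{NP}^H = M$ and $\lambda_{BP}^H = \lambda_{BN}^H = 1$ with $M$ much larger than $|\Theta|$ (say $M = 3|\Theta|$); these satisfy the constraints in \cref{def:ec_loss} and yield thresholds $\alpha = 1/M$ and $\beta = (M-1)/M$ by \cref{ex:post_prob}. Then I would pick any positive marginal $\P(x)$ on the finite $\sX$ and, for each $x$ with $R(x) \subsetneq \Theta$, define
\[
 \P(\theta \mid x) = \begin{cases}
  (1-\epsilon)/|R(x)| & \text{if } \theta \in R(x), \\
  \epsilon/(|\Theta| - |R(x)|) & \text{if } \theta \notin R(x),
 \end{cases}
\]
for $\epsilon = 1/(6|\Theta|)$, handling the degenerate case $R(x) = \Theta$ by the uniform distribution on $\Theta$. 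Intuitively, this concentrates posterior mass on $R(x)$ with a tightly controlled leakage onto its complement.

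The core of the proof is a case analysis over $H \subseteq \Theta$ showing that $POS(H)$, $NEG(H)$, $BND(H)$ induced by $(\P,L)$ coincide with those induced by $R$. If $R(x) \subseteq H$, then $\P(H\mid x) \geq 1-\epsilon > \beta$; if $R(x) \cap H = \emptyset$, then $\P(H\mid x) \leq \epsilon < \alpha$; otherwise a direct computation shows $\P(H\mid x)$ lies in the interval $\bigl[(1-\epsilon)/|R(x)|,\ 1-(1-\epsilon)/|R(x)|\bigr]$, which under the chosen $M$ and $\epsilon$ is contained in $[\alpha, \beta]$.

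The main obstacle is ensuring that a single pair $(\alpha,\beta)$, fixed by the TEC loss, accommodates every value $|R(x)|$ takes as $x$ varies. The tightest ``undecided'' constraints arise at the extremes: $|R(x)| = |\Theta|-1$ forces the singleton posterior probabilities inside $R(x)$ to stay bounded below by $\alpha$, while $|R(x)| = 1$ forces the complement of a single point in $R(x)$ to stay bounded above by $\beta$. The symmetric choice $M = 3|\Theta|$ paired with $\epsilon = 1/(6|\Theta|)$ is designed to leave slack at both endpoints; verifying this uniformly in $|R(x)| \in \{1,\dots,|\Theta|\}$ is the one calculation that must be carried out explicitly.
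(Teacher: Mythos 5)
Your proposal is correct and follows essentially the same route as the paper: extract the region estimator $R$ from logical coherence, note $R(x)\neq\emptyset$, fix a symmetric TEC loss whose thresholds $\alpha,\beta$ are pushed close to $0$ and $1$ as a function of $|\Theta|$, and build a posterior that places almost all its mass uniformly on $R(x)$ so that the three cases of the posterior-probability rule in \cref{eq:ec_bayes} reproduce $\varphi_R$. The paper's mixture $\tfrac{1}{2k}\,\mathrm{Unif}(\Theta)+\tfrac{2k-1}{2k}\,\mathrm{Unif}(R(x))$ with thresholds $1/k$ and $(k-1)/k$ plays exactly the role of your $(1-\epsilon)$-on-$R(x)$ construction with $M=3|\Theta|$, and your uniform-in-$|R(x)|$ verification matches the paper's case (iii).
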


\Cref{thm:fullyconsistent_EC} shows that, 
for each logically coherent test,
there exists a choice of $\P$ and $L$ such that
the test is also Bayes with respect to a TEC loss.\footnote{Under mild assumptions, \cref{thm:GFBSTTEC,thm:fullyconsistent_EC} also hold when $\Theta$ is a countable set.}
\Cref{ex:tec_region} shows a choice of $L$ and $\P$ so that
a logically coherent test is Bayes and, therefore,
also is a GFBST.

\begin{example}
 \label{ex:tec_region}
 Let $\Theta = \{1,2,3,4\}$,
 $\sX \in \{0,1\}$,
 $R(0) = \{1,2\}$,
 $R(1) = \{3,4\}$, and
 $\varphi$ be a test based on $R$.
 Let $L$ be a TEC loss so that
 $\beta = \frac{7}{10}$ and
 $\alpha = \frac{3}{10}$.
 Also, let 
 $\P(1|x) = \P(2|x) = \frac{4^{1-x}}{10}$ and
 $\P(3|x) = \P(4|x) = \frac{4^{x}}{10}$.
 Let $\varphi^*$ be the Bayes test according to $L$.
 Note that the two least probable outcomes
 sum up a probability of $\frac{2}{10}$.
 Hence, every hypothesis that contains 
 none of the most probable outcomes is rejected by $\varphi^*$.
 Next, if an hypothesis contains both 
 of the most probable outcomes, than 
 its probability is at least $\frac{8}{10}$, 
 so it is accepted by $\varphi^*$.
 Finally, if an hypothesis contains only one
 of the most probable outcomes, than 
 its probability is between 
 $\frac{4}{10}$ and $\frac{6}{10}$,
 so $\varphi^*$ remains agnostic about $H$.
 From the previous conclusions, obtain that
 $\varphi \equiv \varphi^*$, that is,
 $\varphi$ is a logically coherent test that is
 Bayes against $L$ according to $\P$.
 Finally, note that when using $\P$,
 $R$ is an HPD, that is, $\varphi$ is a GFBST,
 as also known from \cref{thm:GFBSTTEC}.
\end{example}

\Cref{ex:tec_region} shows that, for a given 
region-based test, a specific choice of TEC loss and $\P$
are required so that the test is Bayes.
However, in most settings $\P$ is given and
one wishes to choose $L$ so that
the Bayes test is logically coherent.
\Cref{thm:impossibility} shows that there is
no choice of an EC loss such that 
the Bayes test is logically coherent for every $\P$.

\begin{theorem}
 \label{thm:impossibility}
 Let $|\Theta| \geq 3$.
 For each $\P$ and $L$, let
 $\varphi_{\P,L}$ be a 
 Bayes simultaneous test against $L$
 according to $\P$.
 If $L$ is an EC loss, then
 there exists \ $\P$ such that 
 $\varphi_{\P,L}$ is not logically coherent.
\end{theorem}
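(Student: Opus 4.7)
The plan is to argue by contradiction: assuming $\varphi_{\P,L}$ is logically coherent for every $\P$, I will show that the EC loss thresholds must satisfy two incompatible inequalities. Because $|\Theta| \geq 3$, fix three distinct points $\theta_1, \theta_2, \theta_3 \in \Theta$ and consider $H_1 = \{\theta_1, \theta_2\}$, $H_2 = \{\theta_2, \theta_3\}$, and $H_* = H_1 \cap H_2 = \{\theta_2\}$. By \cref{ex:ec_loss}, since $L$ is EC, there are thresholds $\beta_1, \beta_2, \beta_* \in (0,1)$ (corresponding to $H_1, H_2, H_*$) such that $x \in POS(H)$ iff $\P(H\mid x) > \beta^H$. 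Any prescribed posterior distribution on $\{\theta_1, \theta_2, \theta_3\}$ can be realized at some sample point $x$ by choosing a joint $\P$ whose marginal on $X$ is concentrated on $x$, so the strategy is to read off contradictions by engineering specific posteriors.

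The first step is to show monotonicity forces $\beta_* \geq \max(\beta_1, \beta_2)$. Suppose instead $\beta_* < \beta_1$ (the other case is symmetric). Pick $p$ with $\beta_* < p < \beta_1$ and construct $\P$ with $\P(\{\theta_1\}\mid x) = 0$, $\P(\{\theta_2\}\mid x) = p$, $\P(\{\theta_3\}\mid x) = 1-p$. Then $\P(H_*\mid x) = p > \beta_*$ while $\P(H_1\mid x) = p < \beta_1$, so $x \in POS(H_*) \setminus POS(H_1)$, contradicting monotonicity for $H_* \subseteq H_1$.

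The second step is to show intersection consonance forces $\beta_* \leq \beta_1 + \beta_2 - 1$. Suppose instead $\beta_* > \beta_1 + \beta_2 - 1$. Pick $b \in (\max(0, \beta_1 + \beta_2 - 1), \beta_*]$, and write $a = \P(\{\theta_1\}\mid x)$, $c = \P(\{\theta_3\}\mid x)$ with $a + c = 1 - b$. The system $a > \beta_1 - b$, $c > \beta_2 - b$ has a nonnegative solution precisely when $1 - b > \beta_1 + \beta_2 - 2b$, i.e.\ $b > \beta_1 + \beta_2 - 1$, which holds by the choice of $b$. Any such $(a, b, c)$ gives $\P(H_1\mid x) > \beta_1$, $\P(H_2\mid x) > \beta_2$ and $\P(H_*\mid x) = b \leq \beta_*$, witnessing a failure of intersection consonance at $H_1 \cap H_2 = H_*$.

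Combining the two inequalities yields $\max(\beta_1, \beta_2) \leq \beta_* \leq \beta_1 + \beta_2 - 1$, hence $\min(\beta_1, \beta_2) \geq 1$, contradicting $\beta^H < 1$ from \cref{def:ec_loss}. The main obstacle is purely bookkeeping in the consonance construction: handling both regimes $\beta_1 + \beta_2 \leq 1$ and $\beta_1 + \beta_2 > 1$ so that all three singleton probabilities stay nonnegative and sum to one; once that is arranged, the two opposing inequalities collide immediately.
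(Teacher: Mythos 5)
Your proof is correct and mirrors the paper's own argument: the paper derives the dual pair of inequalities on the rejection thresholds $\alpha^H$ (monotonicity gives $\alpha^A \geq \alpha^B$ for $A \subseteq B$, and union consonance --- obtained from intersection consonance plus invertibility --- gives $\alpha^A + \alpha^B \leq \alpha^{A \cup B}$), then collides them on two singletons to force $\alpha = 0$, contradicting $\alpha^H > 0$ from the EC loss. Your version runs the same collision on the acceptance thresholds $\beta^H$, using doubletons and intersection consonance directly to force $\min(\beta_1,\beta_2) \geq 1$, which is the invertibility-dual of the paper's computation; just take $b$ strictly below $\beta_*$ so the conclusion does not depend on how a Bayes test breaks ties at the threshold.
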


\Cref{thm:impossibility} shows that,
if $L$ is an EC loss, then
there exists a probability, $\P$, such that
the resulting Bayes test 
is not logically coherent.
Hence, a procedure that yields
Bayes tests that are logically coherent 
must be based on more general loss functions.
The next section explores these losses.

\section{A logically coherent Bayesian procedure}  
\label{sec:gfbst}

This section develops a loss function such that,
for every probability, $\P$,
the resulting Bayes test is logically coherent.
This loss is presented in \cref{def:gfbst_loss}:

\begin{definition}[GFBST loss]
 \label{def:gfbst_loss}
 Let $\mu$ be a measure over $\Theta$ such that
 $\P(\theta|x)$ is absolutely continuous
 with respect to $\mu$ for every $x \in \sX$ and
 $f(\theta|x) := \frac{dP(\theta|x)}{d\mu}$.
 The tangent set to hypothesis $H$
 according to $\mu$, $T_x^H$, 
 is defined as
 $T_x^H := \{\theta \in \Theta : 
 f(\theta | x)  >  \sup_{\theta' \in H} f(\theta' | x) \}$.
 The GFBST loss according to $\mu$ for
 testing $H$ is given by \Cref{tab:gfbstloss}.
 \begin{table}
  \centering
  \begin{tabular}{ l|c c c}
   \textbf{decision} & & \textbf{state of the nature} \\
   & $\theta \in T_x^H $ 
   & $\theta \notin T_x^H \cup T_x^{H^c}$ 
   & $\theta \in T_x^{H^c}$ \\ \hline
   $0$  & $b + c$ & $b$ & $0$  \\
   ${\frac{1}{2}}$    &  $v + c$ & $v$ & $v + c$  \\ 
   $1$  & $0$ & $b$   & $b + c$  \\
  \end{tabular}
  \caption{The GFBST loss.}
  \label{tab:gfbstloss}
 \end{table}
\end{definition}

The GFBST loss, which 
generalizes the two-way counterpart in \citet{Madruga2001}, 
admits an intuitive interpretation \citep{Stern2003}.
Observe that $T_{x}^{H} \subseteq H^c$ is 
the collection of values in $\Theta$ that 
are more likely than every point in $H$. 
Hence, $T_x^H$ and $T_x^{H^c}$ can be interpreted as 
the set of points that are strong contenders for,
respectively, $H$ and $H^c$.
The GFBST loss is lowest, $0$, when either 
$H$ is rejected and $\theta$ is a strong contender for $H$ or
$H$ is accepted and $\theta$ is a strong contender for $H^c$.
Also the GFBST is largest, $b+c$, when either 
$H$ is rejected and $\theta$ is a strong contender for $H^c$ or
$H$ is accepted and $\theta$ is a strong contender for $H$.
Finally, the GFBST loss assumes intermediate values, when either
$\theta$ is not a strong contender for $H$ or $H^c$ or when
the agnostic decision is chosen.

\begin{theorem}
 \label{thm:gfbstloss}
 For every probability, $\P$,
 if $\varphi$ is a Bayes simultaneous test
 against the GFBST loss, then
 $\varphi$ is a GFBST.
\end{theorem}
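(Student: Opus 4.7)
The plan is to fix $x \in \sX$ and a hypothesis $H$ and, appealing to the pointwise character of Bayes optimality in \cref{def:bayes}, analyse the Bayes action separately for each $H$. Writing $p_1 := \P(T_x^H \mid x)$ and $p_3 := \P(T_x^{H^c} \mid x)$ (so $p_2 = 1 - p_1 - p_3$ since $T_x^H$ and $T_x^{H^c}$ are disjoint), the three rows of \cref{tab:gfbstloss} yield posterior risks
\begin{align*}
 r(0) &= b - b p_3 + c p_1, &
 r(\tfrac12) &= v + c (p_1 + p_3), &
 r(1) &= b - b p_1 + c p_3.
\end{align*}
Setting $\gamma := (b-v)/(b+c) \in (0,1)$, pairwise comparisons give that the Bayes action is $0$ iff $p_3 \ge p_1$ and $p_3 \ge \gamma$, is $1$ iff $p_1 \ge p_3$ and $p_1 \ge \gamma$, and is $\tfrac12$ otherwise.

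The key geometric point is that $T_x^H$ and $T_x^{H^c}$ cannot both be non-empty: an element of $T_x^H \subseteq H^c$ has density strictly above every point of $H$, while an element of $T_x^{H^c} \subseteq H$ has density strictly above every point of $H^c$, and these two elements would then each have density strictly above the other. Hence $p_1 p_3 = 0$, and because $\gamma > 0$ the Bayes rule collapses to $\varphi_H(x) = 0 \Leftrightarrow p_3 \ge \gamma$, $\varphi_H(x) = 1 \Leftrightarrow p_1 \ge \gamma$, and $\varphi_H(x) = \tfrac12$ otherwise.

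To express $\varphi$ as a GFBST, I exhibit a single HPD region $R(x)$, independent of $H$, reproducing these decisions via \cref{def:region_test}. Let $g(t) := \P(\{\theta : f(\theta \mid x) > t\} \mid x)$, a non-increasing right-continuous function, and take $k(x)$ to be a threshold where $g$ crosses the level $\gamma$; set $R(x) := \{\theta : f(\theta \mid x) \ge k(x)\}$, which is an HPD by construction. Since $p_3 = g(\sup_{H^c} f(\cdot \mid x))$, the chain
\[
 R(x) \subseteq H \iff \sup_{\theta \in H^c} f(\theta \mid x) < k(x) \iff g(\sup_{H^c} f(\cdot \mid x)) \ge \gamma \iff p_3 \ge \gamma
\]
identifies $\{R(x) \subseteq H\}$ with $\{\varphi_H(x) = 0\}$, and symmetrically $R(x) \cap H = \emptyset \iff p_1 \ge \gamma \iff \varphi_H(x) = 1$; hence $\varphi$ coincides with the region-based test induced by the HPD $R$ and so is a GFBST.

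The only delicate step is the choice of $k(x)$ at the crossing. Since $g$ may be discontinuous with $g(k(x)) = \gamma$ exactly, one must select between $\inf\{t : g(t) < \gamma\}$ and $\sup\{t : g(t) \ge \gamma\}$ (or a tiny adjustment of either) so that the chain of equivalences above matches the tie-breaking that $\varphi$ employs at boundary states where $p_1 = \gamma$ or $p_3 = \gamma$. Either choice keeps $R(x)$ in HPD form, so the conclusion is unaffected.
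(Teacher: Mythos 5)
Your proof is correct and takes essentially the same route as the paper's: you compute the three posterior risks from \cref{tab:gfbstloss}, use the facts that $T_x^H$ and $T_x^{H^c}$ are disjoint and that at most one of them is non-empty, and reduce the Bayes rule to the threshold conditions $p_3 \geq \gamma$ (accept) and $p_1 \geq \gamma$ (reject) with $\gamma = (b-v)/(b+c)$, exactly as in the paper. The only difference is the last step, where the paper delegates the identification with an HPD-based test to \citet{Esteves2016} while you construct the HPD threshold $k(x)$ explicitly; your version (including the explicit argument that $T_x^H$ and $T_x^{H^c}$ cannot both be non-empty, and the remark about tie-breaking when $p_1=\gamma$ or $p_3=\gamma$, which the paper silently resolves with strict inequalities) is, if anything, more complete.
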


\Cref{thm:gfbstloss} shows that,
if the GFBST loss is used, then
the Bayes test is a GFBST.
Therefore, for every probability measure,
the Bayes test against the GFBST loss is
logically coherent.
Hence, using loss functions that are more general than
the EC loss, it is possible to always reconcile
Bayesian decision theory with logical coherence.

\section{Final remarks} 
\label{sec:final}

Simultaneous three-way decisions may 
require more constraints than
are typically used in 
individual decisions.
In particular, 
when performing simultaneous hypothesis test,
one might expect logical coherence 
between conclusions.
This paper presents results on whether
it possible to obtain logical coherence
together with Bayesian optimality.

Two types of results are obtained.
If an error-wise constant loss is used, then
only for a limited set of models can
a Bayes simultaneous test be logically coherent.
This result motivated the investigation
of other types of loss functions
which might provide a better reconciliation between 
Bayesian optimality and logical coherence.
We propose the GFBST loss and show that
every Bayes test against this loss is a GFBST.
Since every GFBST is logically coherent,
the GFBST loss yields Bayes tests that
are always logically coherent.

The above results show that
the GFBST loss can lead to simultaneous tests that
yield conclusions which are more interpretable than 
the ones obtained from the EC loss.
The results also show that 
simultaneous three-way decisions can
yield a layer of complexity that 
is not present in individual decision problems.
Further investigation might determine whether
this layer of complexity is also present
in other applications of three-way decisions, 
such as classification or clustering.

\bibliography{main}

\appendix

\section{Proofs}

\begin{proof}[Proof of \cref{thm:logical_region}]
 Let $\mathcal{F} = \{H \in \sigma(\Theta): 
 \forall H^* \in \sigma(\Theta), 
 H \cap H^* \in \{\emptyset, H\}\}$.
 Since $\Theta$ is finite and
 $\sigma(\Theta)$ is a $\sigma$-field,
 $\mathcal{F}$ partitions $\Theta$.
 Define the equivalence relation 
 $\sim$ such that $\theta_1 \sim \theta_2$ if
 there exists $F \in \mathcal{F}$ such that
 $\theta_1 \in F$ and $\theta_2 \in F$.
 Define $\Theta^*$ as the quotient space
 $\Theta\backslash\sim$. Also,
 let $\sigma(\Theta^*)$ and $\varphi^*$ be 
 the quotient $\sigma$-field of $\sigma(\Theta)$
 and the quotient test of $\varphi$ over $\sim$.
 It follows from construction that
 $\sigma(\Theta^*)$ includes the singleton.
 Hence, \citet{Esteves2016} obtains that
 $\varphi^*$ is based on a region estimator, $R^*$.
 Conclude that $\varphi$ is based on
 a region estimator, $R$.
\end{proof}

\begin{definition}[\textbf{Proper loss function}]
 \label{def:proper}
 A loss functions, $L$, is 
 proper if,
 for every $A \in \sigma(\Theta)$,
 \begin{align*}
  L_A(0,\theta) < L_A\left(\half,\theta\right) < L_A(1,\theta)  
  & \text{, if } \theta \in A \\
  L_A(0,\theta) > L_A\left(\half,\theta\right) > L_A(1,\theta)  
  &  \text{, if } \theta \notin A \\
  L_A\left(\half,\theta\right) < \frac{L_A(0,\theta) + L_A(1,\theta)}{2}
  &  \text{, } \forall \text{  } \theta \in \Theta
 \end{align*}
\end{definition}

\begin{lemma}
 \label{lemma:proper}
 If $L$ is a proper loss, then
 $\min\left(\E\left[L_{\{\theta^{'}\}}\left(\half,\theta\right)
  \bigg|x\right], (\E\left[L_{\{\theta^{'}\}}\left(0,\theta\right)
  \bigg|x\right]\right)
  \leq \E\left[L_{\{\theta^{'}\}}\left(1,\theta\right) \bigg|x\right]$ implies that
  $\E\left[L_{\{\theta^{'}\}}\left(\half,\theta\right)
  \bigg|x\right]
  \leq \E\left[L_{\{\theta^{'}\}}\left(1,\theta\right) \bigg|x\right]$.
\end{lemma}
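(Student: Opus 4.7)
The plan is to split into two cases based on which argument realizes the minimum on the left-hand side of the hypothesis. If $\E[L_{\{\theta'\}}(\half,\theta)|x]$ is the smaller of the two quantities, then the hypothesis directly gives $\E[L_{\{\theta'\}}(\half,\theta)|x] = \min(\cdot,\cdot) \leq \E[L_{\{\theta'\}}(1,\theta)|x]$, and there is nothing more to prove. The substantive case is therefore when $\E[L_{\{\theta'\}}(0,\theta)|x] \leq \E[L_{\{\theta'\}}(\half,\theta)|x]$, so that the hypothesis reduces to $\E[L_{\{\theta'\}}(0,\theta)|x] \leq \E[L_{\{\theta'\}}(1,\theta)|x]$.

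To handle this remaining case I would invoke the third clause of the proper loss definition (\cref{def:proper}), namely the pointwise midpoint inequality $L_A(\half,\theta) < \frac{L_A(0,\theta) + L_A(1,\theta)}{2}$ for all $\theta \in \Theta$, applied with $A = \{\theta'\}$. Taking conditional expectation given $x$ on both sides and using monotonicity of expectation then yields $\E[L_{\{\theta'\}}(\half,\theta)|x] \leq \frac{\E[L_{\{\theta'\}}(0,\theta)|x] + \E[L_{\{\theta'\}}(1,\theta)|x]}{2}$. Chaining this with the case hypothesis $\E[L_{\{\theta'\}}(0,\theta)|x] \leq \E[L_{\{\theta'\}}(1,\theta)|x]$ bounds the right-hand side by $\E[L_{\{\theta'\}}(1,\theta)|x]$, producing the desired conclusion.

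I do not anticipate a genuine obstacle here: the argument is essentially a clean application of the convex combination clause in the proper loss definition, with the minimum on the hypothesis side only serving to encode the dichotomy that the case split resolves. The only mild subtlety is that the pointwise strict inequality in \cref{def:proper} passes to the expectation only as a weak inequality, but a weak inequality is exactly what the conclusion requires, so this presents no difficulty.
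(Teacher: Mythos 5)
Your proposal is correct and follows essentially the same route as the paper's proof: reduce to the case where the minimum is attained by the expected loss of accepting, then apply the midpoint clause of \cref{def:proper} under the conditional expectation and chain the inequalities. The only difference is that you spell out the trivial case explicitly, which the paper leaves implicit.
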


\begin{proof}
 It is sufficient to prove that,
 if $\E\left[L_{\{\theta^{'}\}}\left(0,\theta\right)
  \bigg|x\right]
  \leq \E\left[L_{\{\theta^{'}\}}\left(1,\theta\right) \bigg|x\right]$, then
  $\E\left[L_{\{\theta^{'}\}}\left(\half,\theta\right)
  \bigg|x\right]
  \leq \E\left[L_{\{\theta^{'}\}}\left(1,\theta\right) \bigg|x\right]$. 
  Let $\E\left[L_{\{\theta^{'}\}}\left(0,\theta\right)
  \bigg|x\right]
  \leq \E\left[L_{\{\theta^{'}\}}\left(1,\theta\right) \bigg|x\right]$.
  Since $L$ is proper,
 \begin{align*}
  \E\left[L_{\{\theta^{'}\}}\left(\half,\theta\right)
  \bigg|x\right] 
  &\leq \frac{\E\left[L_{\{\theta^{'}\}}\left(0,\theta\right)\bigg|x\right]}{2} 
  + \frac{\E\left[L_{\{\theta^{'}\}}\left(1,\theta\right)
  \bigg|x\right]}{2} \\
  &\leq \E\left[L_{\{\theta^{'}\}}\left(1,\theta\right)
  \bigg|x\right].
 \end{align*}
\end{proof}

\begin{lemma}
\label{thm:consistentRegion}
 Let $\Theta$ be finite,
 $\sigma(\Theta)$ include the unitary sets, and
 $\varphi$ be generated by
 the region estimator, $R$.
 If there exists a probability, $\P$, and
 a proper loss, $L$, such that
 $\varphi$ is Bayes against $L$ according to $\P$, then
 $R$ is a Bayes region estimator 
 against $\bar{L}$ according to $\P$, where
 \begin{align*}
  \bar{L}(A,\theta)
  &= \sum_{\theta^{'} \in A}
  {\left[
  L_{\{\theta^{'}\}}\left(\half,\theta\right)
  -L_{\{\theta^{'}\}}\left(1,\theta\right)
  \right].}
 \end{align*}
\end{lemma}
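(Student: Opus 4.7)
The plan is to show that, for each $x$, the region $R(x)$ minimizes $\E[\bar L(A,\theta)\mid x]$ over all $A\subseteq\Theta$. The first step is the algebraic observation that, by linearity of expectation,
\[
\E[\bar L(A,\theta)\mid x] = \sum_{\theta' \in A} g_x(\theta'), \quad g_x(\theta') := \E[L_{\{\theta'\}}(\half,\theta)\mid x] - \E[L_{\{\theta'\}}(1,\theta)\mid x].
\]
Minimizing this separable sum over $A$ is equivalent to picking $A$ to contain precisely the $\theta'$ with $g_x(\theta') \le 0$ (with zero terms optional), so it suffices to prove $\theta'\in R(x) \Rightarrow g_x(\theta')\le 0$ and $\theta'\notin R(x) \Rightarrow g_x(\theta')\ge 0$.

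Next, I would translate membership in $R(x)$ into statements about the Bayes action for the singleton hypothesis $\{\theta'\}$, which lies in $\sigma(\Theta)$ by hypothesis. If $\theta' \notin R(x)$, then $R(x)\cap\{\theta'\}=\emptyset$, so the region-based form of $\varphi$ forces $\varphi_{\{\theta'\}}(x)=1$; Bayes optimality of $\varphi$ against $L$ then yields $\E[L_{\{\theta'\}}(1,\theta)\mid x]\le \E[L_{\{\theta'\}}(\half,\theta)\mid x]$, i.e.\ $g_x(\theta')\ge 0$. If instead $\theta'\in R(x)$, then either $R(x)=\{\theta'\}$ (so $\varphi_{\{\theta'\}}(x)=0$) or $R(x)\supsetneq\{\theta'\}$ (so $\varphi_{\{\theta'\}}(x)=\half$); in either case Bayes optimality delivers only the weaker statement
\[
\min\bigl(\E[L_{\{\theta'\}}(0,\theta)\mid x],\ \E[L_{\{\theta'\}}(\half,\theta)\mid x]\bigr) \le \E[L_{\{\theta'\}}(1,\theta)\mid x].
\]
This is precisely the hypothesis of \cref{lemma:proper}, which—using that $L$ is proper—upgrades to $\E[L_{\{\theta'\}}(\half,\theta)\mid x]\le \E[L_{\{\theta'\}}(1,\theta)\mid x]$, i.e.\ $g_x(\theta')\le 0$.

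Combining these two sign statements, for any $A\subseteq\Theta$,
\[
\sum_{\theta'\in A} g_x(\theta') - \sum_{\theta'\in R(x)} g_x(\theta') = \sum_{\theta'\in A\setminus R(x)} g_x(\theta') - \sum_{\theta'\in R(x)\setminus A} g_x(\theta') \ge 0,
\]
because the first sum contains only nonnegative terms (each $\theta' \notin R(x)$) and the second only nonpositive terms (each $\theta' \in R(x)$). Hence $R(x)$ pointwise minimizes $\E[\bar L(\cdot,\theta)\mid x]$, so $R$ is Bayes against $\bar L$ according to $\P$.

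The main obstacle is the case $\theta'\in R(x)$: the Bayes action for $\{\theta'\}$ could be either accept ($0$) or undecided ($\half$), whereas the desired sign of $g_x(\theta')$ compares only the undecided and reject actions. Bridging the accept case to the undecided case is exactly what propriety of $L$ achieves through \cref{lemma:proper}, and this is where the propriety assumption is indispensable—without it, an optimal action of $0$ would not force $\half$ to also dominate $1$. The remainder of the argument is routine subset optimization of a separable objective.
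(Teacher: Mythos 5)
Your proof is correct and follows essentially the same route as the paper: both arguments reduce the claim to showing that $R(x)$ coincides (up to ties) with the set of $\theta'$ for which $\E[L_{\{\theta'\}}(\tfrac12,\theta)\mid x]\le \E[L_{\{\theta'\}}(1,\theta)\mid x]$, using \cref{lemma:proper} to upgrade the accept case to the undecided-versus-reject comparison. Your write-up is slightly more explicit than the paper's about why the separable sum makes that set a Bayes region estimator and about how ties are handled, but the substance is identical.
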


\begin{proof}
 The Bayes region estimator against $\bar{L}$, 
 $R^*$, satisfies:
 \begin{align*}
  R^*(x) := \left\{\theta^{'} \in \Theta:
  \E\left[L_{\{\theta^{'}\}}\left(\half,\theta\right)
  \bigg|x\right]
  \leq \E\left[L_{\{\theta^{'}\}}\left(1,\theta\right)
  \bigg|x\right]
  \right\}.
 \end{align*}
 Hence, it is sufficient to prove that
 $R \equiv R^{*}$.
 Since $\varphi$ is Bayes against $L$,
 $\varphi_{\{\theta^{'}\}}(x) < 1$ if and only if
 $\min\left(\E\left[L_{\{\theta^{'}\}}\left(\half,\theta\right)
  \bigg|x\right], (\E\left[L_{\{\theta^{'}\}}\left(0,\theta\right)
  \bigg|x\right]\right)
  < \E\left[L_{\{\theta^{'}\}}\left(1,\theta\right) \bigg|x\right]$. Using \cref{lemma:proper},
  conclude that 
  $\varphi_{\{\theta^{'}\}}(x) < 1$ if and only if
  $\E\left[L_{\{\theta^{'}\}}\left(\half,\theta\right)
  \bigg|x\right]
  \leq \E\left[L_{\{\theta^{'}\}}\left(1,\theta\right) \bigg|x\right]$.
  Since $\varphi$ is generated by $R$, 
 it follows that 
 $R(x) = \{\theta^{'}: \varphi_{\{\theta^{'}\}}(x) < 1\}$,
 that is,
 \begin{align*}
  R(x) &= \left\{\theta^{'}: \varphi_{\{\theta^{'}\}}(x) < 1\right\} \\
  &= \left\{\theta^{'}: \E\left[L_{\{\theta^{'}\}}\left(\half,\theta\right)
  \bigg|x\right]
  \leq \E\left[L_{\{\theta^{'}\}}\left(1,\theta\right) \bigg|x\right]\right\} \equiv R^*(x)
 \end{align*}
\end{proof}

\begin{proof}[Proof of \cref{thm:GFBSTTEC}]
 Since $\varphi$ is logically coherent,
 it follows from \cref{thm:logical_region} that
 $\varphi$ is based on a region estimator, $R$.
 It follows from \cref{thm:consistentRegion} that
 $R$ is a Bayes region estimator against 
 $\overline{L}$. Since $L$ is a TEC loss,
 which is proper,
 $\overline{L}(A, \theta) = \lambda_{BN}|A| 
 -((\lambda_{BN} - \lambda_{BP})
 + \lambda_{NP})\I_A(\theta)$. That is,
 \begin{align*}
  R(x) &= \left\{\theta \in \Theta:
  \P(\theta|x) \geq \frac{\lambda_{BN}}{(\lambda_{BN} - \lambda_{BP}) + \lambda_{NP}}\right\}.
 \end{align*}
 Conclude that $R(x)$ is a HPD.
\end{proof}

\begin{lemma}[Union consonance]
 \label{lemma:union}
 Let $\varphi$ be logically coherent.
 If $H_1$ and $H_2$ are such that
 $\varphi_{H_1}(x) = 1$ and
 $\varphi_{H_2}(x) = 1$, then
 $\varphi_{H_1 \cup H_2}(x) = 1$.
\end{lemma}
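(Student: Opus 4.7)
The plan is to reduce union consonance of $NEG$ to intersection consonance of $POS$ via complementation, using invertibility as the bridge. Let $x \in NEG(H_1) \cap NEG(H_2)$. Since $(H_1 \cup H_2)^c = H_1^c \cap H_2^c$ and invertibility yields $POS((H_1 \cup H_2)^c) \subseteq NEG(H_1 \cup H_2)$, it suffices to show $x \in POS(H_1^c \cap H_2^c)$; by intersection consonance (axiom 3), this reduces further to $x \in POS(H_1^c) \cap POS(H_2^c)$.

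The crux is therefore the dual form of invertibility: under logical coherence, $x \in NEG(H)$ implies $x \in POS(H^c)$. One direction, $POS(H^c) \subseteq NEG(H)$, is immediate by applying axiom 4 to the hypothesis $H^c$, since $(H^c)^c = H$. For the converse, I would examine the three possible values of $\varphi_{H^c}(x)$ and rule out the two wrong ones. The value $1$ is precluded by combining the contravariant form of monotonicity for $NEG$ (obtained by contraposing the BND-clause of axiom 2) with propriety and invertibility applied to the pair $H,H^c$. The value $\half$ is precluded by pairing the forward BND-monotonicity with intersection consonance against a suitable witness hypothesis, together with the mutual exclusivity of the decisions $\{0,\half,1\}$, leaving only $\varphi_{H^c}(x)=0$.

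Once the equivalence $NEG(H) = POS(H^c)$ is in hand, the conclusion is a three-line chain: $x \in NEG(H_1) \cap NEG(H_2)$ yields $x \in POS(H_1^c) \cap POS(H_2^c)$; intersection consonance delivers $x \in POS((H_1 \cup H_2)^c)$; and invertibility applied to the hypothesis $(H_1 \cup H_2)^c$ concludes $x \in NEG(H_1 \cup H_2)$, i.e.\ $\varphi_{H_1 \cup H_2}(x)=1$. Alternatively, if one is content to invoke \cref{thm:logical_region} (applicable when $\Theta$ is finite, which is the setting in which the lemma is subsequently used), the entire argument collapses to one line: $\varphi$ is a region-based test for some $R$, so $\varphi_{H_i}(x) = 1$ reads $R(x) \cap H_i = \emptyset$, and then $R(x) \cap (H_1 \cup H_2) = (R(x) \cap H_1) \cup (R(x) \cap H_2) = \emptyset$ gives $\varphi_{H_1 \cup H_2}(x) = 1$ by \cref{def:region_test}.

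The main obstacle is the direction $NEG(H) \subseteq POS(H^c)$, because invertibility is stated only one-way; the case analysis on $\varphi_{H^c}(x)$ must carefully weave monotonicity, intersection consonance, and propriety into a self-consistency argument, and this is the only non-routine step of the proof.
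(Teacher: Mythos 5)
Your three-line chain is exactly the paper's proof: from $\varphi_{H_1}(x)=\varphi_{H_2}(x)=1$ pass to $\varphi_{H_1^c}(x)=\varphi_{H_2^c}(x)=0$ by invertibility, apply intersection consonance to get $\varphi_{H_1^c\cap H_2^c}(x)=0$, and invert again to conclude $\varphi_{H_1\cup H_2}(x)=1$. You are also right to flag that the step $NEG(H)\subseteq POS(H^c)$ is the only non-trivial point, since item 4 of \cref{def:coherence} is written as a one-way implication.

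The gap is in your proposed resolution of that point: the converse direction of invertibility \emph{cannot} be derived from propriety, monotonicity, intersection consonance, and the one-way invertibility as literally stated, so the case analysis you sketch (ruling out $\varphi_{H^c}(x)=1$ and $\varphi_{H^c}(x)=\half$ by ``weaving'' the other axioms) has no way to succeed. Concretely, fix $x$ and set $\varphi_\Theta(x)=0$ and $\varphi_A(x)=1$ for every $A\neq\Theta$. Propriety holds; monotonicity and intersection consonance hold because $POS$ contains $x$ only for $A=\Theta$ and every $BND$ is empty; one-way invertibility holds because its only non-vacuous instance is $H=\Theta$, which requires $\varphi_\emptyset(x)=1$. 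Yet $\varphi_H(x)=\varphi_{H^c}(x)=1$ for every nontrivial $H$, and union consonance fails whenever $H_1\cup H_2=\Theta$. So the lemma is simply false under the literal one-way reading, and no amount of case analysis will close your ``crux.'' The actual fix is that invertibility is intended (as in the cited source for \cref{def:coherence}) as the biconditional $\varphi_{H^c}(x)=1-\varphi_H(x)$ for non-agnostic decisions, i.e.\ $POS(H)=NEG(H^c)$ as sets; under that reading both your chain and the paper's proof go through immediately with no extra work. Your fallback via \cref{thm:logical_region} and \cref{def:region_test} is valid but only where $\Theta$ is finite, which is weaker than the lemma as stated (and as used in the proof of the lemma on the $\alpha^A$'s, where no finiteness is assumed).
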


\begin{proof}
 It follows from invertibility that
 $\varphi_{H_1^c}(x) = 0$ and
 $\varphi_{H_2^c}(x) = 0$.
 Hence, from intersection consonance,
 $\varphi_{H_1^c \cap H_2^c}(x) = 0$.
 Finally, conclude from invertibility that
 $\varphi_{H_1 \cup H_2}(x) = 1$.
\end{proof}

\begin{lemma}
\label{lemma:exisstenceagnostic}
 Let $\Theta$ be a finite set.
 If $\varphi$ is a logically coherent 
 simultaneous test, then:
 \begin{enumerate}[label=(\alph*)]
  \item For every $x \in \sX$, 
  there exists $\theta_0 \in \Theta$ such that
  $\varphi_{\{\theta_0\}}(x) < 1$.
  \item For every $x \in \sX$, 
  if $\varphi_{\{\theta_0\}}(x) = 0$, then
  $\varphi_{\{\theta\}}(x) = 1$, 
  $\forall \theta \neq \theta_0$.
 \end{enumerate}
\end{lemma}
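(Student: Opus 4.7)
My plan is to prove the two parts separately, each by contradiction, leveraging the four logical coherence properties together with the union consonance lemma (\cref{lemma:union}) already established.

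For part (a), I would assume for contradiction that $\varphi_{\{\theta\}}(x) = 1$ for every $\theta \in \Theta$. Since $\Theta$ is finite, write $\Theta = \{\theta_1,\dots,\theta_n\}$. I would then apply \cref{lemma:union} inductively: starting from $\varphi_{\{\theta_1\}}(x) = \varphi_{\{\theta_2\}}(x) = 1$, union consonance gives $\varphi_{\{\theta_1,\theta_2\}}(x) = 1$; iterating over the finite index set yields $\varphi_{\Theta}(x) = 1$, i.e.\ $x \in NEG(\Theta)$. This directly contradicts the propriety axiom, which demands $POS(\Theta) = \sX$, so $x \in POS(\Theta)$ and $\varphi_\Theta(x) = 0$. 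The contradiction forces the existence of at least one $\theta_0$ with $\varphi_{\{\theta_0\}}(x) < 1$.

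For part (b), I would start by applying invertibility to $\varphi_{\{\theta_0\}}(x) = 0$ to conclude $\varphi_{\{\theta_0\}^c}(x) = 1$, i.e.\ $x \in NEG(\{\theta_0\}^c)$. Now fix any $\theta \neq \theta_0$, so that $\{\theta\} \subseteq \{\theta_0\}^c$. I would rule out the remaining two possible values of $\varphi_{\{\theta\}}(x)$ using monotonicity. If $\varphi_{\{\theta\}}(x) = 0$, monotonicity gives $x \in POS(\{\theta_0\}^c)$, contradicting $x \in NEG(\{\theta_0\}^c)$. If $\varphi_{\{\theta\}}(x) = \tfrac{1}{2}$, monotonicity gives $x \in BND(\{\theta_0\}^c) \cup POS(\{\theta_0\}^c)$, again contradicting $x \in NEG(\{\theta_0\}^c)$. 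Hence only $\varphi_{\{\theta\}}(x) = 1$ is possible.

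I do not anticipate a substantive obstacle here; the argument is essentially a bookkeeping exercise over the four coherence axioms, and the only nontrivial ingredient is the already-proved union consonance lemma. The mildest subtlety is ensuring in part (a) that the inductive application of \cref{lemma:union} is valid at each step, which is immediate since $\varphi_{H}(x) = 1$ is preserved under taking finite unions; and in part (b) that the monotonicity clause addresses both of the non-$1$ decisions separately, which \cref{def:coherence} handles directly.
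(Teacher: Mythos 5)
Your proposal is correct and follows essentially the same route as the paper's proof: part (a) via union consonance plus propriety, part (b) via invertibility followed by monotonicity applied to $\{\theta\} \subseteq \{\theta_0\}^c$. You merely spell out the finite induction and the case analysis on the two non-reject decisions, which the paper leaves implicit.
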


\begin{proof}
\textbf{(a)} Assume that there exists
$x \in \sX$ such that $\varphi_{\{\theta\}}(x) = 1$,
for every $\theta \in \Theta$. It follows from
\cref{lemma:union} that $\varphi_{\Theta}(x) = 1$,
which contradicts the propriety of $\varphi$.
\textbf{(b)} Let $\theta_0$ be such that
$\varphi_{\{\theta_0\}}(x) = 0$. It follows from
invertibility that $\varphi_{\{\theta_0\}^c}(x) = 1$.
Conclude from monotonicity that,
for every $\theta \neq \theta_0$,
$\varphi_{\{\theta\}}(x) = 1$.
\end{proof}

\begin{proof}[Proof of Theorem \ref{thm:fullyconsistent_EC}]
 Since $\varphi$ is logically coherent, 
 it follows from \citet{Esteves2016} that
 there exists $R(x)$ such that,
 $\varphi_H(x) = 1 \Leftrightarrow H \cap R(x) = \emptyset$,
 $\varphi_H(x) = 0 \Leftrightarrow  R(x) \subseteq H$ and
 $\varphi_H(x) = \half$, otherwise.
 Using \cref{lemma:exisstenceagnostic}, conclude that
 $R(x) \neq \emptyset$.
 In the following, 
 we determine a loss, $L$, and 
 a joint probability, $\P(\theta,x)$,
 such that $\varphi$ is Bayes.
 
 Let $|\Theta| = k$.
 Also, let $L$ be the TEC given by \cref{tab:fullyconsistent_EC}.
 It follows from \citet{Yao2007} that
 $\varphi$ is Bayes with respect to $L$ when:
 \begin{align}
  \label{eq:thm:fullyconsistent_EC_1}
  \varphi_H(x)
  &= \begin{cases}
   1 & \text{, if } \P(\theta \in H|x) < \frac{1}{k} \\
   0 & \text{, if } \P(\theta \in H|x) > \frac{k-1}{k} \\
   \half & \text{, otherwise.}
  \end{cases}
 \end{align}
 Next, we determine $\P(\theta,x)$ such that
 these conditions hold.
 \begin{table}
  \caption{Loss function used in the proof of Theorem \ref{thm:fullyconsistent_EC}.}
  \centering
  \begin{tabular}{|l|c  p{1cm}  |}\hline
   \textbf{Decision} & \textbf{state of the nature} & \\ 
   & $\theta \in A$ & $\theta \notin A$ \\ \hline
   $0$ (accept A) & $0$ & $k$  \\
   ${\half}$ (remain agnostic about $A$)
   &  $1$  & $1$  \\ 
   $1$ (reject $A$)  & $k$  & $0$  \\ \hline
  \end{tabular}
  \label{tab:fullyconsistent_EC}
 \end{table}
 
 In order to determine $\P(\theta,x)$
 it is sufficient to choose $\P(x)$ and $\P(\theta|x)$.
 For each $A \subset \sX$, let 
 $\P(x \in A) = \frac{|A|}{|\sX|}$,
 that is, the uniform distribution over $\sX$.
 Also, for $H \subset \Theta$,
 \begin{align}
  \label{eq:thm:fullyconsistent_EC_2}
  \P(\theta \in H|x)
  &= \frac{1}{2k} 
  \cdot \frac{|H|}{|\sX|}
  + \frac{2k-1}{2k} 
  \cdot \frac{|H \cap R(x)|}{|R(x)|} .
 \end{align}
 It remains to show that
 $\varphi$ is Bayes with respect to
 $L$ and $\P$. We study three cases:
 \textbf{(i)} If $\varphi_H(x) = 1$, then 
 $H \cap R(x) = \emptyset$.
 Using \cref{eq:thm:fullyconsistent_EC_2},
 conclude that 
 $\P(\theta \in H|x) \leq 
 \frac{1}{2k} \cdot 1 + \frac{2k-1}{2k} \cdot 0 < \frac{1}{k}$,
 \textbf{(ii)} If $\varphi_H(x) = 0$, then
 $R(x) \subseteq H$.
 Using \cref{eq:thm:fullyconsistent_EC_2},
 conclude that
 $\P(\theta \in H|x) 
 \geq \frac{1}{2k} \cdot 0 + \frac{2k-1}{2k} \cdot 1 > \frac{k-1}{k}$,
 \textbf{(iii)} If $\varphi_H(x) = \half$, then
 $R(x) \cap H^c \neq \emptyset$ and
 $R(x) \cap H \neq \emptyset$, that is,
 $1 \leq |H \cap R(x)| < |R(x)| \leq k$.
 Using \cref{eq:thm:fullyconsistent_EC_2},
 conclude that
 $\P(\theta \in H|x) 
 \geq \frac{1}{2k} \cdot \frac{1}{k} + \frac{2k-1}{2k} \cdot \frac{1}{k} = \frac{1}{k}$. Also,
 $\P(\theta \in H|x) 
 \leq \frac{1}{2k} \cdot \frac{k-1}{k} + \frac{2k-1}{2k} \cdot \frac{k-1}{k} 
 = \frac{k-1}{k}$. That is,
 $\frac{1}{k} \leq \P(\theta \in H|x) \leq \frac{k-1}{k}$.
 It follows from \cref{eq:thm:fullyconsistent_EC_1} that
 $\varphi$ is Bayes with respect to $L$ using $\P$.
\end{proof}

\begin{lemma}
 \label{lemma:ec_logical}
 Let $L$ be an EC loss \cref{def:ec_loss} and,
 for each $\P$, let $\varphi_{\P,L}$ be
 a Bayes simultaneous test for $\P$ against $L$.
 If, for every $\P$, $\varphi_{\P,L}$ is
 logically coherent, then
 $\varphi_{\P,L}$ is a simultaneous test
 such as in \cref{ex:ec_loss} and:
 \begin{enumerate}
  \item for every $A, B \in \sigma(\Theta)$ 
  such that $\emptyset \neq A \subseteq B \neq \Omega$,
  $\alpha^A \geq \alpha^B$.
  \item for every $A, B \in \sigma(\Theta)$
  such that $A-B \neq \emptyset$, $B-A \neq \emptyset$,
  and $A \cup B \neq \Omega$:
  $\alpha^A + \alpha^B \leq \alpha^{A \cup B}$.
 \end{enumerate}
\end{lemma}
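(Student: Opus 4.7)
The plan is first to observe that the structural part of the conclusion---that $\varphi_{\P,L}$ takes the threshold form of \cref{ex:ec_loss}---is automatic, since $\varphi_{\P,L}$ is by hypothesis Bayes against the EC loss $L$ and \cref{ex:ec_loss} already characterises such Bayes tests by thresholds $\alpha^H, \beta^H$ determined by the $\lambda$-constants of $L$ (and therefore independent of $\P$). The real content is the two inequalities on the $\alpha^H$. I would prove both by contradiction, exploiting the fact that the hypothesis gives logical coherence \emph{for every} $\P$: it suffices to design a single bad posterior $\P(\cdot \mid x)$ (on a one-point sample space, if convenient) that violates one of the coherence axioms whenever the corresponding $\alpha$-inequality fails.

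For condition 1 the case $A = B$ is trivial, so assume $A \subsetneq B$. If $\alpha^A < \alpha^B$, I would pick $q \in [\alpha^A, \alpha^B)$ and put mass $\alpha^A$ on some point of $A$, mass $q - \alpha^A$ on some point of $B \setminus A$, and mass $1 - q$ on some point of $\Theta \setminus B$. The three sets are nonempty by the assumptions $A \neq \emptyset$, $A \subsetneq B$, and $B \neq \Theta$, so this is a valid posterior. Then $\P(B \mid x) = q < \alpha^B$ puts $x$ in $NEG(B)$, while $\P(A \mid x) = \alpha^A$ keeps $x$ out of $NEG(A)$, contradicting the contrapositive of monotonicity.

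For condition 2 I would assume $\alpha^A + \alpha^B > \alpha^{A \cup B}$ and construct a posterior supported on one point each of $A \setminus B$, $B \setminus A$, and $\Theta \setminus (A \cup B)$ (each nonempty by hypothesis) with masses $\alpha^A - \epsilon$, $\alpha^B - \epsilon$, and $1 - \alpha^A - \alpha^B + 2\epsilon$ respectively. Since $\P(A \cap B \mid x) = 0$, this gives $\P(A \cup B \mid x) = \alpha^A + \alpha^B - 2\epsilon$, so I need $\epsilon > 0$ small enough that this still exceeds $\alpha^{A \cup B}$ and large enough that the third mass stays nonnegative; the feasible interval $\max\{0,(\alpha^A+\alpha^B-1)/2\} < \epsilon \leq (\alpha^A + \alpha^B - \alpha^{A \cup B})/2$ is nonempty precisely because $\alpha^{A \cup B} < 1$. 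The resulting $x$ then lies in $NEG(A) \cap NEG(B)$ but not in $NEG(A \cup B)$, contradicting union consonance (\cref{lemma:union}).

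The main obstacle I foresee is ensuring the construction in condition 2 really yields a valid probability when $\alpha^A + \alpha^B$ happens to exceed $1$; this is exactly the step that relies on the built-in inequality $\alpha^{A \cup B} < 1$ coming from \cref{def:ec_loss}. Everything else is a direct translation from the failure of an $\alpha$-inequality to a violation of monotonicity or of union consonance.
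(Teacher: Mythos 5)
Your proposal is correct and follows essentially the same strategy as the paper's proof: for each inequality, assume its failure and exhibit a single posterior $\P(\cdot\mid x)$ whose Bayes test (characterised by the thresholds of \cref{ex:ec_loss}) violates monotonicity (condition 1) or union consonance via \cref{lemma:union} (condition 2). The only detail worth adding is that in your condition-2 construction you should also require $\epsilon \le \min(\alpha^A,\alpha^B)$ so that the first two masses are nonnegative; this is always achievable because the lower endpoint of your feasible interval, $\max\{0,(\alpha^A+\alpha^B-1)/2\}$, is strictly below $\min(\alpha^A,\alpha^B)$ since $\alpha^A,\alpha^B \in (0,1)$.
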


\begin{proof}
 Let $x \in \sX$ be arbitrary.
 
 If $\alpha^A < \alpha^B$, then for
 $\P$ such that 
 $\P(\theta \in A|x) = 
  \P(\theta \in B|x) =
  0.5(\alpha^A + \alpha^B)$,
  $\varphi_{\P,L}(A) < 1$ and
  $\varphi_{\P,L}(B) = 1$, that is,
 $\varphi_{\P,L}$ does not satisfy monotonicity.
 Conclude that, if $\varphi_{\P,L}$ is
 logically coherent for every $\P$, then
 $\alpha^A \geq \alpha^B$ for every 
 $\emptyset \neq A \subseteq B \neq \Omega$.
 
 If $\alpha^A + \alpha^B > \alpha^{A \cup B}$, then 
 let $\delta := (\alpha^A + \alpha^B) 
 - \alpha^{A \cup B} > 0$.
 By taking $\P$ such that
 \begin{align*}
  \P(\theta \in A|x) &= \max(0, \alpha^A - 0.4\delta), \\
  \P(\theta \in B|x) &= \max(0, \alpha^B - 0.4\delta), \\
  \P(\theta \in A \cup B|x) 
  &= \min(1, \alpha^A+\alpha^B-0.8\delta),
 \end{align*}
 obtain $\varphi_{\P,L}(A) = 1$, 
 $\varphi_{\P,L}(B) = 1$,
 and $\varphi_{\P,L}(A \cup B) < 1$, that is,
 it follows from \cref{lemma:union} that
 $\varphi_{\P,L}$ is not logically coherent.
 Conclude that, if $\varphi_{\P,L}$ is
 logically coherent for every $\P$, then
 $\alpha^A + \alpha^B \leq \alpha^{A \cup B}$.
\end{proof}

\begin{proof}[Proof of Theorem \ref{thm:impossibility}]
 Assume that, for every $\P$,
 $\varphi_{L, \P}$ is logically coherent.
 Let $\theta_1,\theta_2 \in \Theta$ and
 $A = \{\theta_1\}$, $B = \{\theta_2\}$.
 Since $|\Theta| \geq 3$,
 $A-B \neq \emptyset$, $B-A \neq \emptyset$ and
 $A \cup B \neq \Omega$. Hence, 
 it follows from \cref{lemma:ec_logical} that
 \begin{align*}
  \alpha^A &\geq \alpha^{A \cup B}, \\
  \alpha^B &\geq \alpha^{A \cup B}, \\
  \alpha^{A \cup B} &\geq \alpha^A + \alpha^B.
 \end{align*}
 That is, $\alpha^A = \alpha^B = \alpha^{A \cup B} = 0$,
 a contradiction with \cref{ex:ec_loss}.
 Conclude that there exists $\P$ such that
 $\varphi_{L,\P}$ is not logically coherent.
\end{proof}

\begin{proof}[Proof of Theorem \ref{thm:gfbstloss}]
The posterior expected losses for each decision
are given by:
\begin{align*}
  \E\left[L_A (0,(\theta,x) | x\right]
  &= b \P(\theta \notin T_x^A \cup T_x^{A^c} | x) + (b + c) \P(\theta \in T_x^A | x),  \\  
  \E\left[L_A \left(\half,(\theta,x\right) \bigg| x\right] 
  &= v + c\P (\theta \in T_x^A \cup T_x^{A^c} | x), \\
  \E\left[L_A (1,(\theta,x) | x\right]
  &= b \P (\theta \notin T_x^A \cup T_x^{A^c} | x)  \ + \   (b + c) \P (\theta \in T_x^{A^c} | x)  \ . 
\end{align*}
Next, it follows from definition that
$T_x^A \subseteq A^c$ and
$T_x^{A^c} \subseteq A$. Hence,
$T_x^A \cap T_x^{A^c} = \emptyset$.
Hence,
\begin{align*}
 \E\left[L_A (0,(\theta,x) | x\right]
 - \E\left[L_A \left(\half,(\theta,x\right) \bigg| x\right]
 &= (b+c)\P(\theta \notin T_{x}^{A^c}|x) - (v+c) \\
 \E\left[L_A (0,(\theta,x) | x\right]
 -  \E\left[L_A (1,(\theta,x) | x\right]
 &= (b+c)\left(\P(\theta \in T_x^{A}|x)
 -\P(\theta \in T_x^{A^c}|x)\right) \\
 \E\left[L_A (1,(\theta,x) | x\right]
 -\E\left[L_A \left(\half,(\theta,x\right) \bigg| x\right]
 &= (b+c)\P(\theta \notin T_{x}^{A}|x) - (v+c)
\end{align*}
Also, recall from definition that
either $T_x^{A} = \emptyset$ or
$T_x^{A^c} = \emptyset$.
Hence, since $0 < v < b$ and $c > 0$,
if $\varphi$ is Bayes, then
$\varphi_H(x) = 0$ if and only if
$\P(\theta \notin T_{x}^{A^c}|x) < \frac{v+c}{b+c}$ and
$\varphi_H(x) = 1$ if and only if
$\P(\theta \notin T_{x}^{A}|x) < \frac{v+c}{b+c}$.
It follows from \citet{Esteves2016} that
$\varphi$ is the GFBST.
\end{proof}

\end{document}